\newcounter{spec}
{\end{list}}
\renewcommand{\P}{{\mathbf P}}
\newcommand{\A}{{\mathbf A}}
\newcommand{\Z}{{\mathbb Z}}
\renewcommand{\lim}{\varprojlim}
\numberwithin{equation}{section}
\newfont{\gothic}{eufb10}
\newtheorem{theo}{Th\'{e}or\`{e}me}[section]
\newtheorem{prop}[theo]{Proposition}
\newtheorem{lem}[theo]{Lemme}
\newtheorem{cor}[theo]{Corollaire}
\theoremstyle{definition}
\newtheorem{defi}[theo]{D\'efinition}
\theoremstyle{remark}
\newtheorem{rema}[theo]{Remarque}
\newcommand{\bthe}{\begin{theo}}
\newcommand{\ble}{\begin{lem}}
\newcommand{\bpr}{\begin{prop}}
\newcommand{\bco}{\begin{cor}}
\newcommand{\bde}{\begin{defi}}
\newcommand{\ethe}{\end{theo}}
\newcommand{\ele}{\end{lem}}
\newcommand{\epr}{\end{prop}}
\newcommand{\eco}{\end{cor}}
\newcommand{\ede}{\end{defi}}
\DeclareFontFamily{U}{wncy}{}
\DeclareFontShape{U}{wncy}{m}{n}{%
<5>wncyr5%
<6>wncyr6%
<7>wncyr7%
<8>wncyr8%
<9>wncyr9%
<10>wncyr10%
<11>wncyr10%
<12>wncyr6%
<14>wncyr7%
<17>wncyr8%
<20>wncyr10%
<25>wncyr10}{}
\DeclareMathAlphabet{\cyr}{U}{wncy}{m}{n}
\begin{document}

  \title[Droites sur les hypersurfaces cubiques] {Droites sur les hypersurfaces cubiques}

\author{J.-L. Colliot-Th\'el\`ene}
\address{  Universit\'e Paris-Sud, CNRS,  Paris-Saclay\\Math\'ematiques, B\^atiment 425\\91405 Orsay Cedex\\France}
\email{jlct@math.u-psud.fr}

\date{28 mars 2017}
\maketitle

 \begin{abstract}
 On montre que sur toute hypersurface cubique  complexe de dimension
 au moins 2, le groupe de Chow des cycles  de dimension 1 est engendr\'e par les droites.
 Le cas lisse est un th\'eor\`eme connu. La d\'emonstration
 ici donn\'ee repose sur un r\'esultat sur les surfaces g\'eom\'etriquement rationnelles 
 sur un corps quelconque (1983), obtenu via la K-th\'eorie alg\'ebrique.
  \end{abstract}
  
  \begin{altabstract}
  Over any complex cubic hypersurface of dimension at least 2,
  the Chow group of 1-dimensional cycles is spanned by the lines
  lying on the hypersurface. The  smooth case has already been given
  several other proofs.
   \end{altabstract}

\section*{Introduction}

Soit $X$ une vari\'et\'e sur un corps quelconque. On note $CH_{i}(X)$ le groupe de Chow des cycles de dimension $i$ sur $X$ modulo l'\'equivalence rationnelle.
Le th\'eor\`eme suivant est connu.

\begin{theo}\label{lisse}
Soient $k$ un corps alg\'ebriquement clos de caract\'eristique z\'ero et
$X \subset \P^n_{k}$, avec $n \geq 3$, une hypersurface cubique lisse.
Le groupe de Chow $CH_{1}(X)$  des 1-cycles de $X$ modulo \'equivalence rationnelle
est engendr\'e par les droites contenues dans $X$.
$\Box$
\end{theo}

Pour $n=3$, c'est un r\'esultat  classique.
Pour $n= 6$, c'est \'etabli par Paranjape \cite[\S 4]{paranjape}.
Celui-ci utilise l'existence d'un $\P^2$ contenu dans $X \subset \P^6$
pour fibrer $X \subset \P^6$ en quadriques de dimension 2 au-dessus de $\P^3$.
Paranjape \'ecrit qu'une m\'ethode analogue vaut pour tout $n \geq 6$.
Pour tout $n \geq  4$, le th\'eor\`eme est \'etabli par M. Shen \cite[Thm. 1.1]{shen}
par une m\'ethode diff\'erente de celle de Paranjape.
Dans la r\'ecente pr\'epublication, M. Shen \cite[Thm. 4.1 ]{shen2} \'etend ce  r\'esultat
en un th\'eor\`eme  pour toute hypersurface cubique lisse de dimension au moins 3
sur un corps de base non n\'ecessairement alg\'ebriquement clos, lorsque l'hypersurface cubique contient
une droite d\'efinie sur ce corps.
Pour $n \geq 5$, le th\'eor\`eme est  aussi un cas particulier d'un r\'esultat de Tian et Zong
sur les   intersections compl\`etes de Fano \cite[Thm. 6.1]{TZ} de dimension $m$
et de multidegr\'e $(d_{1},\cdots,d_{c})$ avec $d_{1}+ \cdots d_{c} \leq m-1$
(r\'esultat obtenu par encore une autre m\'ethode).

Comme le note d\'ej\`a  Paranjape \cite{paranjape}, 
 pour $n \geq 6$, l'\'enonc\'e implique que le groupe de Chow $CH_{1}(X)$
 est \'egal \`a $\Z$. En effet
le sch\'ema de Fano des droites de $X$ est alors une vari\'et\'e de Fano (lisse, projective, faisceau anticanonique ample),
 et un  th\'eor\`eme bien connu
de Campana 
 et de Koll\'ar-Miyaoka-Mori  
 dit que les vari\'et\'es de Fano sont
rationnellement connexes (par cha\^{\i}nes).

Dans cette note, je donne une nouvelle d\'emonstration du th\'eor\`eme \ref{lisse}, qui vaut
pour les hypersurfaces cubiques quelconques. C'est le th\'eor\`eme \ref{sing}.
Il y a deux ingr\'edients.
Le premier ingr\'edient est un r\'esultat sur les surfaces projectives  lisses g\'eom\'etriquement rationnelles  sur les corps de dimension cohomologique~1  (\cite{CT},
 th\'eor\`eme \ref{K2} ci-dessous),
 dont la d\'emonstration utilise la K-th\'eorie alg\'ebrique (th\'eor\`eme de Merkur'ev et Suslin).  
Le second ingr\'edient est classique : c'est la  classification des types de surfaces cubiques  singuli\`eres 
 sur un corps alg\'ebriquement clos. La d\'emonstration proc\`ede par sections hyperplanes et r\'ecurrence sur la dimension.
M\^eme pour une hypersurface cubique lisse donn\'ee, elle  impose de consid\'erer toutes les hypersurfaces cubiques  de dimension un de moins obtenues par section hyperplane,  et celles-ci peuvent \^etre singuli\`eres.

Nous n'utiliserons que les propri\'et\'es les plus simples  des groupes de Chow des vari\'et\'es,
telles qu'on les trouve dans le chapitre 1 du livre   \cite{fulton}, en particulier la suite de localisation
\cite[Prop. 1.8]{fulton} et le comportement dans une fibration en droites affines \cite[Prop. 1.9]{fulton}.

\'Etant donn\'ee une vari\'et\'e $X$ projective sur un corps $K$, la R-\'equivalence sur l'ensemble
$X(K)$ des points $K$-rationnels de $X$ est la relation d'\'equivalence engendr\'ee par
la relation \'el\'ementaire suivante : deux $K$-points $A$ et $B$ sont \'el\'ementairement li\'es
s'il existe un $K$-morphisme $f : \P^1_{K} \to X$ tel que $A$ et $B$ soient dans $f(\P^1(K))\subset X(K)$.
Si deux $K$-points $A$ et $B$ sont R-\'equivalents, alors $A-B=0 \in CH_{0}(X)$.

\section{Groupe de Chow des z\'ero-cycles d'une hypersurface cubique sur un corps de fonctions d'une variable}

Le th\'eor\`eme suivant est \ 
une cons\'equence imm\'ediate de \cite[Prop. 4]{CT},
puisque le groupe de cohomologie galoisienne $H^1(K,S)$
pour un $K$-tore $S$ sur un corps $K$ de dimension cohomologique 1
est nul.
\begin{theo} \cite[Theorem A (iv)]{CT}  \label{K2}
Soit $K$ un corps de caract\'eristique z\'ero et de dimension cohomologique 1.
Soit $X$ une $K$-surface projective, lisse, g\'eom\'etriquement rationnelle.
Le noyau de l'application degr\'e $deg_{K} : CH_{0}(X) \to \Z$
est nul. Si $X$ poss\`ede un point rationnel, par exemple si $K$ est
un corps $C_{1}$, alors l'application degr\'e $$deg_{K} : CH_{0}(X) \to \Z$$
est un isomorphisme. $\Box$
\end{theo}

Ce th\'eor\`eme s'applique en particulier aux surfaces cubiques lisses.
\'Etudions maintenant le cas des surfaces cubiques quelconques.

\begin{prop}  \label{surfaces}
 Soit $K$ un corps de caract\'eristique z\'ero et de dimension cohomologique 1.
Soit $X \subset \P^3_{K}$ une surface cubique.
Supposons $X(K) \neq \emptyset$, ce qui est le cas si $K$ est $C_{1}$,
par exemple si $K$ est un corps de fonctions d'une variable sur un corps
alg\'ebriquement clos.
Alors l'application degr\'e
$$deg_{K}: CH_{0}(X) \to \Z$$ est un isomorphisme.
\end{prop}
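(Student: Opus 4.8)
The plan is to reduce to Theorem~\ref{K2} whenever $X$ is geometrically rational, and to treat the remaining, possibly non-rational, configurations by hand through the localization sequence. If $X$ is smooth it is a geometrically rational surface (a smooth cubic surface is rational over $\overline{K}$), and since $X(K)\neq\emptyset$ the isomorphism follows at once from Theorem~\ref{K2}. So I would assume $X$ singular and organize the argument along the classification of singular cubic surfaces over an algebraically closed field of characteristic zero. Two facts will be used throughout. First, surjectivity of $\deg_K$ is automatic from $X(K)\neq\emptyset$, so only the vanishing of $\ker(\deg_K)$ has to be proved. Second, for a smooth projective geometrically rational surface the vanishing of $\ker(\deg)$ holds \emph{unconditionally} (the first assertion of Theorem~\ref{K2}, which does not use a rational point). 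I also record that a finite extension $L/K$ is again of characteristic zero, of cohomological dimension $\le 1$, and $C_1$, so that all these tools remain available over the residue fields of the closed points of $X$.

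I would first treat the geometrically integral normal case, where $X$ is either a del Pezzo surface with rational double points or a cone. In the rational double point case let $f:\widetilde{X}\to X$ be the minimal desingularization; then $\widetilde{X}$ is a smooth projective geometrically rational surface, so $\ker(\deg)=0$ on $CH_0(\widetilde{X})$. The proper pushforward $f_*:CH_0(\widetilde{X})\to CH_0(X)$ preserves degrees, and it is surjective because over each singular point $P$ (a closed point whose residue field $\kappa(P)$ is finite over $K$, hence $C_1$) the reduced exceptional fibre is a connected configuration of conics over $\kappa(P)$, which carries a $\kappa(P)$-rational point by the $C_1$ property. A degree-zero class $\alpha\in CH_0(X)$ then lifts to a degree-zero class $\beta\in CH_0(\widetilde{X})$, which vanishes, whence $\alpha=f_*\beta=0$.

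The cone case is the crucial one, since there $X$ need not be geometrically rational and Theorem~\ref{K2} does not apply: the cone over a smooth plane cubic is not rational. Let $v$ be the vertex, a $K$-rational point, and let $\pi:X\setminus\{v\}\to C$ be the projection onto the plane cubic curve $C$. Each fibre of $\pi$ is the affine line obtained by deleting $v$ from a ruling, so $X\setminus\{v\}$ is the total space of a line bundle over $C$. By the behaviour of Chow groups in an affine-line fibration \cite[Prop.~1.9]{fulton}, $CH_0(X\setminus\{v\})=0$, and the localization sequence \cite[Prop.~1.8]{fulton} shows that $CH_0(X)$ is generated by the class of $v$; since $\deg_K[v]=1$ the degree map is an isomorphism. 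This computation is insensitive to the nature of $C$ and in particular disposes of the genuinely non-rational cones.

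Finally I would handle the non-normal geometrically integral surfaces and the non-geometrically-integral ones (unions of a plane and a quadric, of three planes, and their non-reduced degenerations). For a non-normal integral cubic I would compare $X$ with its normalization $\nu:X'\to X$, a rational normal surface, by means of the two localization sequences attached to the non-normal locus and its preimage; for a reducible or non-reduced cubic I would stratify $X$ by its components and their mutual intersections, which are planes $\P^2_L$, quadric surfaces, lines or conics over finite extensions $L/K$, whose $CH_0$ is controlled by the cases already settled together with $CH_0(\P^2_L)=\Z$. The main obstacle is precisely this last stage: assembling $CH_0(X)$ from the strata requires a careful diagram chase through the localization sequences and the repeated verification that the relevant pushforwards are surjective, which in turn rests on producing $\kappa$-rational points on the pertinent exceptional, normalization and intersection loci—each such existence statement being furnished by the $C_1$ property of the residue fields. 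Keeping track of the non-reduced structure and of components merely conjugate over $K$ is where the bookkeeping is heaviest.
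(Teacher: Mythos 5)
The portion of your argument that is actually carried out (smooth surfaces, normal surfaces with rational double points, cones over smooth plane cubics) is a valid alternative to the paper's route: the paper instead splits the normal case according to whether $X$ has a singular $K$-rational point (then $X(K)/R=\{*\}$ by \cite[Lemme 1.3]{madore2008}) or not (then Nishimura's lemma plus Theorem~\ref{K2} on a resolution), and treats an arbitrary closed point by base change to its residue field; your pushforward argument avoids both lemmas. But it contains a real gap as written: the claim that the reduced exceptional fibre over a singular point, ``a connected configuration of conics over $\kappa(P)$'', has a $\kappa(P)$-point \emph{by the $C_{1}$ property} is not a legitimate application of $C_{1}$. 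That property produces points on hypersurfaces of low degree; it says nothing about a configuration of curves, whose components moreover need not be individually defined over $\kappa(P)$, since Galois may permute them. What saves the step is that the dual graph is a tree (an ADE diagram): a Galois action on a finite tree fixes a vertex or an edge, so either some component is defined over $\kappa(P)$ --- a smooth conic, which has a point because $cd(\kappa(P))\leq 1$ forces the vanishing of the Brauer group of every finite extension --- or a geometric intersection point of two components is Galois-stable, hence (characteristic zero) $\kappa(P)$-rational. Note also that $C_{1}$ is not among the hypotheses of the proposition: only $cd(K)\leq 1$ and $X(K)\neq\emptyset$ are assumed, so every such point-existence statement must go through Brauer groups, as the paper does for conics, and not through $C_{1}$.

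The decisive gap is the last stage, which you leave as a plan and yourself flag as ``the main obstacle'': the non-normal integral surfaces and the non-geometrically-integral ones are not proved at all. You have the difficulty exactly backwards --- these are the easy cases, and no stratification, normalization, or diagram chase is needed. (i) Every non-reduced cubic surface and every union of three planes is a cone (three planes in $\P^3$ always share a point or a line), and for an \emph{arbitrary} cone the vertex argument works over any field: the vertex locus is a $K$-rational linear space, and every closed point lies on a ruling through it, so its class in $CH_{0}$ is a multiple of a vertex point. This single observation also disposes of cones over \emph{singular} plane cubics, which are non-normal and therefore escape the cases you completed. (ii) If $X$ is reduced, not a cone and not geometrically integral, then $X=P\cup Q$ with $P$ a plane and $Q$ a geometrically integral quadric, both defined over $K$ (the factorization of the cubic form is Galois-stable); $C=P\cap Q$ is a conic, which has a $K$-point $m$ since $cd(K)\leq 1$, every point of $P$ is a multiple of $m$ in $CH_{0}(P)$, and $CH_{0}(Q)$ is handled by the vertex argument (quadric cone) or by Theorem~\ref{K2} (smooth quadric with a rational point, hence $K$-rational). (iii) If $X$ is integral, not a cone and non-normal, it has a double line $D$ defined over $K$, and for every finite extension $L/K$ each $L$-point off $D$ lies on the $L$-line residual to $2D$ in the plane it spans with $D$; that line meets $D$, so $X(L)/R=\{*\}$, and the degree map is an isomorphism by pushing forward from $X_{L}$. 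With (i)--(iii) your outline closes in a few lines; as submitted, it is not a complete proof of the proposition.
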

  \begin{proof}
Comme toute surface cubique lisse sur un corps alg\'ebri\-que\-ment clos
est rationnelle, le cas o\`u $X$ est lisse est un cas particulier du th\'eor\`eme \ref{K2}.

Supposons $X$ singuli\`ere. Si   $X \subset \P^3_{K}$ est un c\^one,
tout point ferm\'e de $X$ est rationnellement \'equivalent \`a un multiple
d'un point $K$-rationnel du sommet du c\^one (cet argument vaut sur un corps
quelconque). 

Si $X$ n'est pas un c\^one, mais n'est pas g\'eom\'etriquement 
int\`egre, alors c'est l'union d'un plan  $P$ et d'une quadrique $Q$  g\'eom\'etriquement int\`egre,
leur intersection est une conique $C$ dans $\P^2_{K}$. Toute telle conique poss\`ede un
point $K$-rationnel, puisque $cd(K)\leq 1$, et  $deg_{K} : CH_{0}(C) \to \Z$ est un isomorphisme.
Fixons $m \in C(K)$.
Tout point  ferm\'e du plan $P$ est rationnellement \'equivalent \`a un multiple de $m$.
Si la quadrique $Q$ est un c\^one de sommet $q \in Q(K)$, tout point ferm\'e
de $Q$ est rationnellement \'equivalent \`a un multiple de $q$, et $m$
est rationnellement \'equivalent \`a $q$. Si la quadrique $Q$ est lisse, alors
elle est $K$-rationnelle car elle poss\`ede un $K$-point, et $deg_{K} : CH_{0}(Q) \to \Z$ est un isomorphisme (en fait $Q(K)/R=\{*\}$).
On conclut que  $deg_{K} : CH_{0}(X) \to \Z$ est un isomorphisme.

Supposons d\'esormais que la surface cubique $X \subset \P^3_{K}$ n'est pas un c\^one et est g\'eom\'etriquement  int\`egre. Elle est alors g\'eom\'etriquement rationnelle.
 Les diverses singularit\'es possibles ont \'et\'e analys\'ees depuis longtemps
(Schl\"affli, Cayley, B. Segre, Bruce--Wall \cite{brucewall}, Demazure, Coray--Tsfasman   \cite{coraytsfasman}).

Si les points singuliers ne sont pas isol\'es, alors la surface cubique $X$ contient une droite double $D \subset X$,  qui est d\'efinie sur $K$. 
Tout $K$-point de $X$ hors de $D$ est situ\'e sur une droite d\'efinie sur $K$
rencontrant $D$, \`a savoir la droite r\'esiduelle de l'intersection avec $X$  du plan d\'efini
par $D$ et le $K$-point.
On a donc $X(K)/R=\{*\}$ et $deg_{K} : CH_{0}(X) \to \Z$ est un isomorphisme.

Supposons  d\'esormais de plus que  les points singuliers de $X$ sont  isol\'es. 

Si $X$ poss\`ede un point singulier $K$-rationnel,
alors $X(K)/R=\{*\}$ \cite[Lemme 1.3]{madore2008}, sous la simple hypoth\`ese que toute conique
sur $K$ poss\`ede un point rationnel. On a donc alors $X(L)/R= \{*\}$ pour toute extension finie 
de corps $L/K$. Ainsi $deg_{K} : CH_{0}(X) \to \Z$ est un isomorphisme.

Supposons dor\'enavant de plus que l'on a $X_{sing}(K)=\emptyset$. Soit  $f: Y \to X$ une r\'esolution des singularit\'es. 
Un argument simple (lemme de Nishimura) montre que l'application induite $Y(K) \to X(K)$
contient les $K$-points lisses de $X$ dans son image. Donc  $Y(K) \to X(K)$ est surjectif.
Par hypoth\`ese, on a $X(K)\neq \emptyset$.   Soient $P$ et $Q$ deux $K$-points de $X$.
Soient $M$, resp. $N$, dans
$Y(K)$ d'image $P$, resp. $Q$, dans $X(K)$. La $K$-surface $Y$ est projective, lisse, g\'eom\'etriquement rationnelle. Le th\'eor\`eme \ref{K2} assure    $M-N =0 \in CH_{0}(Y)$.
Le morphisme propre $f$  induit $f_{*} : CH_{0}(Y) \to CH_{0}(X)$.
On a donc $P-Q=0 \in CH_{0}(X)$.  Si $R$  est un point ferm\'e de $X$, de corps r\'esiduel $L=K(R)$, suivant que 
  $X_{L}$ poss\`ede un $L$-point singulier ou non, l'un des deux arguments ci-dessus
  garantit $R-M_{L} = 0 \in CH_{0}(X_{L})$, et donc  $deg_{K} : CH_{0}(X) \to \Z$ est un isomorphisme.
 \end{proof}

\begin{theo}\label{generique}
  Soit $K$ un corps de caract\'eristique z\'ero et de dimension cohomologique 1. Soient $n \geq 3$ et
 $X \subset \P^n_{K} $, $n \geq 3$ une hypersurface cubique. Si l'on $X(K) \neq \emptyset$,
 par exemple si $K$ est un corps $C_{1}$,
 l'application degr\'e
$deg_{K}: CH_{0}(X) \to \Z$ est un isomorphisme.
\end{theo}

\begin{proof} 
Soit $O$ un point $K$-rationnel et $P$ un point ferm\'e de $X$, de corps r\'esiduel
$L=K(P)$. Sur $X_{L} \subset \P^n_{L} $, on dispose d'un point $L$-rationnel $p$
d\'efini par $P$ et du $L$-point  $q= O_{L}$.
On choisit un espace lin\'eaire $H \subset \P^n_{L}$
de dimension 3
qui contient $p$ et $q$. Soit $Y:= X_{L} \cap  H$. Si $Y=H$,
alors $p$ et $q$ sont  $R$-\'equivalents sur $X_{L}$, donc $p-q = 0 \in CH_{0}(Y)$.
Si $Y \subset H$ est une surface cubique, le th\'eor\`eme pr\'ec\'edent
assure aussi $p-q = 0 \in CH_{0}(Y)$ et donc $p-q=0 \in CH_{0}(X_{L})$.
Ainsi $P- [L:K]O = 0 \in CH_{0}(X).$
\end{proof}
\begin{rema} Pour  tout corps $K$ qui est $C_{1}$, et tout $n \geq 5$, un argument \'el\'ementaire
  \cite[Prop. 1.4]{madore2008}   montre  que l'on a $X(K)/R=\{*\}$ 
pour toute hypersurface cubique (lisse ou non), d'o\`u il r\'esulte imm\'ediatement que $deg_{K}: CH_{0}(X) \to \Z$ est un isomorphisme  \cite[Cor. 1.6]{madore2008}. C'est une question ouverte si sur un tel corps $K$,
et d\'ej\`a sur un corps $K$ de fonctions d'une variable sur le corps des complexes,
on a $X(K)/R=1$ pour toute hypersurface
cubique  lisse $X \subset \P^n_{K}$ pour $n=3,4$.
\end{rema}

\section{Groupe de Chow des 1-cycles d'une hypersurface cubique sur un corps alg\'ebriquement clos}

\begin{theo} \label{sing}  
Soit $k$ un corps alg\'ebriquement clos de caract\'eristique z\'ero.
Soit $X \subset \P^n_{k}$, avec $n \geq 3$ une hypersurface cubique.
Le groupe de Chow $CH_{1}(X)$ est  engendr\'e par
les droites contenues dans $X$.
\end{theo}
\begin{proof}  On va \'etablir cet \'enonc\'e par r\'ecurrence sur $n \geq 3$.
On commence par \'etablir le cas $n=3$ par une discussion cas par cas.

Dans un plan $\P^2$ tout 1-cycle est rationnellement \'equivalent \`a  un multiple d'une droite.
Pour une quadrique $Q \subset \P^3$ non singuli\`ere, le groupe de Picard de $Q$
est engendr\'e par les deux classes de g\'en\'eratrices. 
Si $Y \subset \P^3$ de coordonn\'ees $(x,y,z,t)$ est un c\^one d\'efini par une \'equation $f(x,y,z)=0$,
et de sommet $p$ de coordonn\'ees $(0,0,0,1)$,  
 $CH_{1}(Y)=CH_{1}(Y\setminus p)$ est engendr\'e par les g\'en\'eratrices du c\^one.
Ceci \'etablit le r\'esultat dans le cas o\`u la surface cubique n'est pas int\`egre, et aussi
dans le cas o\`u c'est un c\^one.

Supposons donc $X$ int\`egre et non conique. Si les singularit\'es de $X$ ne sont pas
isol\'ees, alors $X$ poss\`ede une droite double.  
On peut alors \cite[\S 2, Case E]{brucewall} 
trouver des coordonn\'ees homog\`enes
$(x,y,z,t)$ de $\P^3$ telles que la surface soit donn\'ee soit par l'\'equation
$$x^2z+y^2t=0$$
soit par l'\'equation
$$x^2z+xyt+y^3=0.$$
Dans le premier cas, le compl\'ementaire des deux droites $x=y=0$ et $x=t=0$, d\'ecoup\'ees par $x=0$,
est isomorphe au plan affine $\A^2$ de coordonn\'ees $(y,t)$.
Dans le second cas, le compl\'ementaire de la droite $x=y=0$ d\'ecoup\'ee par $x=0$
est isomorphe au plan affine $\A^2$ de coordonn\'ees $(y,t)$. Comme on a $CH_{1}(\A^2)=0$,
ceci \'etablit que $CH_{1}(X)$ est engendr\'e par des droites de $X$.

Sinon, $X$ est normale, et si $f: X' \to X$ est sa d\'esingularisation minimale,
alors $X'$ est une surface de del Pezzo g\'en\'eralis\'ee de degr\'e 3,
et les ``droites'' de $X'$
sont les transform\'ees propres des vraies droites de $X$.
Voir l\`a-dessus \cite[Exemple 0.5]{coraytsfasman}.
La projection $CH_{1}(X') \to CH_{1}(X)$ est clairement surjective,
et le groupe  $CH_{1}(X')=Pic(X')$ est engendr\'e par les ``droites'' de $X'$ (courbes
$D$ lisses de genre z\'ero avec  $(D.D)=-1$ 
et
les ``racines irr\'eductibles'' (courbes lisses de genre z\'ero avec $(D.D)=-2$) 
qui sont des courbes contract\'ees par $f$
sur les points singuliers de $X$.
Donc $ CH_{1}(X)$ est engendr\'e par les vraies droites de $X \subset \P^3$.

Soit $n\geq  4$. Supposons le cas $n-1$ \'etabli. Soit $X \subset \P^{n}$ une hypersurface cubique.
On trouve dans $X$ une droite $D$ (il en existe sur toute surface cubique sur $k$ alg\'ebriquement clos) et  on choisit $Q\simeq \P^{n-2} \subset \P^{n}$
un espace lin\'eaire de dimension $n-2$   
qui ne rencontre pas $D$ et qui n'est pas contenu dans $X$.  
On consid\`ere  le pinceau des  espaces lin\'eaires $\P^{n-1} \subset \P^n$ qui contiennent $Q$.
On trouve ainsi une vari\'et\'e $Y\subset X \times \P^1$ munie d'un morphisme propre 
 $Y \to X$
et d'une fibration $Y \to \P^1$ dont les fibres au-dessus de
$k$-points   $s \in \P^1(k)$ sont des hypersurfaces cubiques $Y_{s} \subset \P^n_{k}$
sections hyperplanes de $X \subset  \P^{n}_{k}$ (l'hypoth\`ese que $X$ ne contient pas $Q$
garantit qu'aucun $Y_{s}$ n'est \'egal  \`a $\P^n_{k}$)
et dont la fibre g\'en\'erique est une hypersurface cubique $Y_{\eta} \subset \P^{n-1}_{K}$,
avec $K=k(\P^1)$. La droite $D$ d\'efinit une section de la  fibration $Y \to \P^1$, soit une courbe $M \subset Y$, dont
l'image se restreint en un
 $K$-point rationnel de $Y_{\eta}$. On dispose de la suite exacte
$$ \oplus_{s \in \P^1(k)} CH_{1}(Y_{s}) \to CH_{1}(Y) \to CH_{0}(Y_{\eta}) \to 0.$$
D'apr\`es le th\'eor\`eme \ref{generique}, la classe de $M$ dans $CH_{1}(Y)$ s'envoie sur un
g\'en\'erateur de  $CH_{0}(Y_{\eta})\simeq\Z$.
L'application $CH_{1}(Y) \to CH_{1}(X)$ est surjective. En effet le morphisme $Y \to X$
induit un isomorphisme au-dessus du compl\'ementaire du ferm\'e propre $X \cap Q \subset Q$, et au-dessus
de chaque point de $X \cap Q$, la fibre est une droite projective.
L'image de $M$ est la droite $D$ de $X$, chaque groupe $CH_{1}(Y_{s})$ est par hypoth\`ese
de r\'ecurrence engendr\'e par des droites de $Y_{s}$, dont les images dans $X$ sont
des droites de $X$.
\end{proof}

\end{document}